\numberwithin{equation}{section}
\definecolor{darkgreen}{rgb}{0,0.45,0} 
\def\black{\color{black}}
\definecolor{pinegreen}{rgb}{0.15,0.7,0.15}
\definecolor{lightgrey}{rgb}{0.666666,0.666666,0.666666}
\definecolor{brown}{rgb}{.44,.13,0}
\newcommand{\ssm}{\ensuremath{\mathsf M}\xspace}
\newcommand{\ssn}{\ensuremath{\mathsf N}\xspace}
\newcommand{\ssc}{\ensuremath{\mathsf C}\xspace}
\newcommand{\cc}{\ensuremath{\mathcal C}\xspace}
\newcommand{\cm}{\ensuremath{\mathcal M}\xspace}
\newcommand{\cq}{\ensuremath{\mathcal Q}\xspace}
\newcommand{\mM}{\ensuremath{\mathbb M}\xspace}
\newcommand{\op}{^{\mathsf{op}}}
\newcommand{\rev}{^{\mathsf{rev}}}
\newcommand{\nto}{\nrightarrow}
\newcommand{\Q}{\mathsf Q \black}
  \newtheorem{proposition}{Proposition}[section]
  \newtheorem{theorem}[proposition]{Theorem}
  \theoremstyle{definition}
\theoremstyle{remark}
  \newtheorem{remark}[proposition]{Remark}
  \newcounter{c}
  \newcommand{\etyk}[1]{\vspace{-7.4mm}$$\begin{equation}\Label{#1}
  \addtocounter{c}{1}}
  \renewcommand{\]}{\ifnum \value{c}=1 $$\else \end{equation}\fi}
\begin{document}

\title{A simplicial approach to multiplier bimonoids}

\author{Gabriella B\"ohm} 
\address{Wigner Research Centre for Physics, H-1525 Budapest 114,
P.O.B.\ 49, Hungary}
\email{bohm.gabriella@wigner.mta.hu}
\author{Stephen Lack}
\address{Department of Mathematics, Macquarie University NSW 2109, Australia}
\email{steve.lack@mq.edu.au}
\date\today

\begin{abstract}
Although multiplier bimonoids in general are not known to correspond to
comonoids in any monoidal category, we classify them in terms of maps from the 
Catalan simplicial set to another suitable simplicial set; thus they can be
regarded as (co)monoids in something more general than a monoidal 
category (namely, the simplicial set itself). 
We analyze the particular simplicial maps corresponding to that class of
multiplier bimonoids which can be regarded as comonoids.   
\end{abstract}
  
\maketitle


\section{Introduction} \label{sec:intro}
 
The recent paper \cite{BuckleyGarnerLackStreet} showed that monoids, as well
as many  generalizations, including monads, monoidal categories,
skew monoidal categories \cite{Szlach:skew}, and internal versions of these,
can be classified as simplicial maps from the {\em Catalan simplicial set} 
$\mathsf C$ to appropriately chosen simplicial sets. For the constructions in
the current paper the most relevant observation in
\cite{BuckleyGarnerLackStreet} is a bijective correspondence between
monoids in a monoidal category $\mathcal M$ and  simplicial maps
from $\mathsf C$ to the nerve $\mathsf N(\mathcal M)$ of $\mathcal M$.  

{\em Bialgebras} --- over a field or, more generally, in a braided monoidal
category \cc --- can be defined as comonoids in the monoidal category \cm of
monoids in \cc. Thus applying the results of \cite{BuckleyGarnerLackStreet},
they are classified by simplicial maps from $\mathsf C$ to the nerve $\mathsf
N(\mathcal M\op)$ of the category \cm regarded with the opposite composition. 

Classically, {\em Hopf algebras} over a field are defined as bialgebras
with a further property. {\em Multiplier Hopf algebras}
\cite{VanDaele:multiplier_Hopf} generalize Hopf algebras beyond the case when
the algebra has a unit. The typical motivating example of a multiplier Hopf
algebra consists of finitely supported functions on an infinite group with
values in the base field. The analogous notion of {\em multiplier bialgebra}
was introduced later,  in \cite{BohmGomezTorrecillasLopezCentella:wmba},
together with its `weak' generalization.

For many applications it is important to work with Hopf algebras and
bialgebras not only over fields but, more generally, in braided monoidal
categories which are different from the symmetric monoidal category of vector
spaces. The {\em formulation of multiplier bialgebras in braided monoidal
categories} is our longstanding project initiated in
\cite{BohmLack:braided_mba}.   

Generalizing some constructions in \cite{JanssenVercruysse:mba&mha} to
any braided monoidal category $\mathcal C$, we described in
\cite{BohmLack:cat_of_mbm} how {\em certain} multiplier bimonoids in $\mathcal
C$ can be seen as {\em certain} comonoids in an appropriately constructed
monoidal category $\mathcal M$. 
In light of the findings of \cite{BuckleyGarnerLackStreet}, this gives
rise to a  correspondence between these multiplier bialgebras and certain
simplicial maps from the Catalan simplicial set $\mathsf C$ to $\mathsf
N(\mathcal M\op)$.

The aim of this paper is to go beyond that characterization and prove a
bijection between  {\em arbitrary} multiplier bimonoids in
$\mathcal C$ and {\em arbitrary} simplicial maps from $\mathsf C$
to a suitable simplicial set $\mathsf M_{12}$. The simplicial maps $\mathsf C
\to \mathsf N(\mathcal M\op)$ that correspond, via the results of
\cite{BohmLack:cat_of_mbm} and \cite{BuckleyGarnerLackStreet}, to nice enough
multiplier bimonoids, turn out to factorize through a canonical 
embedding of a simplicial subset of $\mathsf M_{12}$ into  $\mathsf
N(\mathcal M\op)$.

{\em Regular} multiplier bimonoids constitute a distinguished class of
multiplier bimonoids. In order to classify them as well, we also present a
simplicial set $\mathsf M$ with the property that simplicial maps $\mathsf C
\to \mathsf M$ correspond bijectively to regular multiplier bimonoids in
\cc. 
\smallskip

\subsection*{Notation}
Throughout the paper, \cc denotes a braided monoidal category. We do not
assume that it is strict but --- relying on coherence --- we omit explicitly
denoting the associativity and unit isomorphisms. The monoidal unit is denoted
by $I$ and the monoidal product is denoted by juxtaposition (we also use the
power notation for the iterated monoidal product of the same object). The
braiding is denoted by $c$. The composite of morphisms $f\colon  A\to
B$ and $g\colon B\to C$ in \cc is denoted by $g.f\colon:A\to C$ and we write
$1$ for the identity morphisms in \cc. 
\smallskip

\subsection*{Acknowledgement}
We gratefully acknowledge the financial support of the Hungarian Scientific
Research Fund OTKA (grant K108384), as well as the Australian Research Council
Discovery Grant (DP130101969) and an ARC Future Fellowship (FT110100385). The
second named author is grateful for the warm hospitality of his hosts during
visits to the Wigner Research Centre in Sept-Oct 2014 and Aug-Sept 2015. 

\section{Preliminaries on the Catalan simplicial set}

In this section we briefly recall from \cite{BuckleyGarnerLackStreet} an
explicit description of the Catalan simplicial set and its role in the
classification of monads in bicategories; thus in particular of monoids in
monoidal categories.  

\subsection{Simplicial sets}

Consider the {\em simplex category} $\Delta$ whose objects are non-empty
finite ordinals and whose morphisms are the order preserving functions. By 
definition, a {\em simplicial set} is a presheaf on $\Delta$. Explicitly, a
simplicial set $\mathsf W$ is given by a collection $\{\mathsf W_n\}$ of sets
labelled by the natural numbers $n$ --- the sets of {\em $n$-simplices} --- 
together with the {\em face maps} $d_i\colon \mathsf W_n \to \mathsf W_{n-1}$
and the {\em degeneracy maps} $s_i\colon \mathsf W_n \to \mathsf W_{n+1}$, for
$0\leq i \leq n$, obeying the {\em simplicial relations}:
\begin{gather*}
d_i d_j = d_{j-1} d_i \quad \textrm{if\ \ } i<j \qquad\qquad
s_i s_j = s_{j+1} s_i \quad \textrm{if\ \ } i\leq j \\
d_is_j=\left\{
\begin{array}{ll}
s_{j-1} d_i \quad & \textrm{if\ \  } i<j \\
1 \quad & \textrm{if\ \  } i\in\{j,j+1\} \\
s_jd_{i-1} \quad & \textrm{if\ \ } i>j+1.
\end{array}
\right. 
\end{gather*}
An $n$-simplex is said to be {\em degenerate} if it belongs to the image of
one of the degeneracy maps, otherwise it is {\em non-degenerate}.

We often draw an $n$-simplex $w$ as an $n$-dimensional oriented 
geometric simplex whose $n-1$ dimensional faces are $d_i(w)$. For
example, for $n=2$ we draw  an oriented triangle 
$$
\xymatrix@C=15pt{
& w_1:=d_0d_2(w)=d_1d_0(w) \ar[rd]^-{w_{12}:=d_0(w)} \ar@{}[d]|-w \\
w_0:=d_1d_2(w)=d_1d_1(w) \ar[ru]^-{w_{01}:=d_2(w)} \ar[rr]_-{w_{02}:=d_1(w)} &&
w_2:=d_0d_1(w)=d_0d_0(w),}
$$
for $n=3$ we draw an oriented tetrahedron, and so on. 

A {\em simplicial map} is a natural transformation between such
presheaves. Explicitly, a simplicial map $\mathsf W \to \mathsf V$ is a
collection of functions $\{f_n\colon \mathsf W_n\to \mathsf V_n\}$ labelled by
the natural numbers $n$ which commute with the face and degeneracy maps in the
sense that $s_if_n=f_{n+1}s_i$ and $d_if_n=f_{n-1}d_i $ for all possible
values of $i$. 

\subsection{The Catalan simplicial set}

The {\em Catalan simplicial set} $\mathsf C$ has a single 0-simplex $\ast$. It
has two 1-simplices: $s_0(\ast)$ and a non-degenerate one to be called
$\alpha$. There are three degenerate 2-simplices $s_0s_0(\ast)=s_1s_0(\ast)$,
$s_0(\alpha)$ and $s_1(\alpha)$ and two non-degenerate ones to be denoted by 
$$
\xymatrix{
&\ast \ar[rd]^-{\alpha} \ar@{}[d]|-\tau \\
\ast \ar[ru]^-{\alpha} \ar[rr]_-\alpha &&
\ast}\qquad 
\xymatrix{
&\ast \ar[rd]^-{s_0(\ast)} \ar@{}[d]|-\varepsilon \\
\ast \ar[ru]^-{s_0(\ast)} \ar[rr]_-\alpha &&
\ast .}
$$
All higher simplices are generated {\em coskeletally}, meaning that for
any natural number $n>2$, and for any {\em $n$-boundary} (that is, $n+1$-tuple
$\{w_0,\dots,w_n\}$ of $n-1$-simplices such that $d_j(w_i)=d_i(w_{j+1})$ for
all $0\leq i \leq j <n$) there is a unique {\em filler} (that is, an
$n$-simplex $w$ obeying $d_i(w)=w_i$ for all $0\leq i \leq n$). In this 
situation we write $w=(w_0,\dots,w_n)$.

From this property of the Catalan simplicial set one can deduce that there
are four non-degenerate 3-simplices   
$$
\phi  = (\tau,\tau,\tau,\tau),\,
\lambda =(\varepsilon,s_1(\alpha),\tau,s_1(\alpha)),\,
\varrho =(s_0(\alpha),\tau,s_0(\alpha),\varepsilon),\,
\kappa =(\varepsilon,s_1(\alpha),s_0(\alpha),\varepsilon) 
$$
corresponding to the four tetrahedra drawn below. 
$$
\xymatrix@C=20pt@R=20pt{
\ast \ar[rr]^-\alpha \ar[dd]_-\alpha \ar[rrdd]|(.3)\alpha^(.8)\tau_(.8)\tau &&
\ast \ar[dd]^-\alpha \ar@{-}[ld]^(.4)\tau_(.4)\tau \\
& \ar[ld]|-\alpha \\
\ast &&
\ast \ar[ll]^-\alpha}\quad
\xymatrix@C=20pt@R=20pt{
\ast \ar[rr]^-\alpha \ar[dd]_-\alpha 
\ar[rrdd]|(.3)\alpha^(.65){\hspace{-5pt}s_1(\alpha)}_(.7){s_1(\alpha)\hspace{-5pt}} &&
\ast \ar[dd]^-{s_0(\ast)} \ar@{-}[ld]_(.4)\tau^(.4)\varepsilon \\
& \ar[ld]|-\alpha \\
\ast &&
\ast \ar[ll]^-{s_0(\ast)}}\quad
\xymatrix@C=20pt@R=20pt{
\ast \ar[rr]^-{s_0(\ast)} \ar[dd]_-\alpha 
\ar[rrdd]|(.3)\alpha^(.8)\varepsilon_(.8)\tau &&
\ast \ar[dd]^-{s_0(\ast)} 
\ar@{-}[ld]^(.7){\hspace{-5pt}s_0(\alpha)}_(.5){s_0(\alpha)\hspace{-5pt}} \\
& \ar[ld]|-\alpha \\
\ast &&
\ast \ar[ll]^-\alpha}\quad
\xymatrix@C=20pt@R=20pt{
\ast \ar[rr]^-{s_0(\ast)} \ar[dd]_-\alpha 
\ar[rrdd]|(.3)\alpha^(.8)\varepsilon_(.7){s_1(\alpha)\hspace{-5pt}}  &&
\ast \ar[dd]^-{s_0(\ast)} 
\ar@{-}[ld]^(.4)\varepsilon_(.5){s_0(\alpha)\hspace{-5pt}} \\
& \ar[ld]|-\alpha \\
\ast &&
\ast \ar[ll]^-{s_0(\ast)}}
$$
For some equivalent, more conceptual, descriptions of $\mathsf C$ consult
\cite{BuckleyGarnerLackStreet}. 

\subsection{The nerve of a bicategory} \label{sect:nerve}

Any bicategory $\mathcal B$ determines a simplicial set $\mathsf N(\mathcal
B)$ known as the {\em nerve} of $\mathcal B$. The 0-simplices of $\mathsf
N(\mathcal B)$ are the objects of $\mathcal B$. The 1-simplices are the
1-cells of $\mathcal B$, with faces provided by the source and the
target maps. For a given 2-boundary $\{w_{12},w_{02},w_{01}\}$, the 2-simplices 
$$
\xymatrix{
&w_1\ar[rd]^-{w_{12}} \ar@{}[d]|-w \\
w_0 \ar[ru]^-{w_{01}} \ar[rr]_-{w_{02}} &&
w_2}
$$
are 2-cells $w\colon w_{12}w_{01} \to w_{02}$ in $\mathcal B$. For a given 
3-boundary $\{w_{123},w_{023},w_{013},w_{012}\}$, there is precisely one filler
if the diagram 
$$
\xymatrix{
(w_{23}w_{12})w_{01} \ar[r]^-\cong \ar[d]_-{w_{123}1} &
w_{23}(w_{12}w_{01}) \ar[r]^-{1w_{012}} &
w_{23}w_{02} \ar[d]^-{w_{023}} \\
w_{13}w_{01} \ar[rr]_-{w_{013}} &&
w_{03}}
$$
commutes and there is no filler otherwise. If the filler exists then it is
denoted by $(w_{123},w_{023},$ $w_{013},w_{012})$. All higher simplices are
determined coskeletally. 

The degenerate 1-simplex on a 0-simplex $A$ is the identity 1-cell $1\colon
A\to A$. The degenerate 2-simplices $s_0( a )$ and
$s_1 ( a ) $ on a 1-simplex $a$ are the coherence
isomorphism 2-cells $a1\to a$ and $1a\to a$, respectively. On higher simplices
the degeneracy maps are determined by the uniqueness of the filler for a given
boundary. 

As was observed in \cite{BuckleyGarnerLackStreet}, a simplicial map $\mathsf
C \to \mathsf N(\mathcal B)$ is the same thing as a monad in $\mathcal B$. The
1-cell underlying the monad is the image of $\alpha$, with multiplication and
unit provided by the images of $\tau$ and $\varepsilon$, respectively. 

Monoidal categories can be seen as bicategories with a single object. Thus the
above considerations apply in particular to them.  In particular, the
above used symbol $\mathsf N(\cm)$ stands for the {\em monoidal nerve}
of a monoidal category \cm (rather than the nerve of the underlying ordinary
category). Since a monad in a one-object bicategory is the same as a monoid in
the corresponding monoidal category, simplicial maps $\mathsf C \to \mathsf
N(\cm)$ classify the monoids in \cm. 

\section{A simplicial description of multiplier bimonoids}

Multiplier bimonoids in braided monoidal categories are defined as compatible
pairs of counital fusion morphisms \cite{BohmLack:braided_mba}. Thus it
is not too surprising that the first step in our simplicial 
characterization of multiplier bimonoids is a simplicial treatment of
counital fusion morphisms.
In Section \ref{sect:mbm} we shall associate to the braided monoidal category
\cc a simplicial set $\mathsf M_{12}$ such that a simplicial map $\mathsf C\to
\mathsf M_{12}$ is the same thing as a multiplier bimonoid in \cc.   
As a preparation for that, first we construct in Section \ref{sect:fusion} a
simplicial set $\mathsf M_1$ and analyze the relation of simplicial maps
$\mathsf C \to \mathsf M_1$ to counital fusion morphisms in \cc. The
simplicial set $\mathsf M_1$ and its symmetric counterpart $\mathsf M_2$ will
be used as building blocks of $\mathsf M_{12}$.   

\subsection{Counital fusion morphisms}\label{sect:fusion}
Recall that a {\em fusion morphism} on an object $A$ of \cc is a morphism
$t\colon A^2 \to A^2$ making commutative the first diagram of  
\begin{equation}\label{eq:fusion_morphism}
\xymatrix@C=1pt{
A^3 \ar[rrr]^-{t1} \ar[d]_-{1t} &&&
A^3 \ar[rrr]^-{1t} &&&
A^3 \\
A^3 \ar[rr]_-{c1} &&
A^3 \ar[rr]_-{1t} &&
A^3 \ar[rr]_-{c^{-1}1} &&
A^3 \ar[u]_-{t1}} \qquad
\xymatrix{
A^2\ar[r]^-t \ar[rd]_-{1e} &
A^2 \ar[d]^-{1e} \\
& A.} 
\end{equation}
The morphism $e\colon A\to I$ is a {\em counit} of $t$ if it makes commutative
the second diagram of \eqref{eq:fusion_morphism}.  

The simplicial set $\mathsf M_1$ has a single 0-simplex $\ast$. Its
1-simplices are the semigroups in \cc; that is, objects $A$ equipped with an
associative multiplication $m\colon A^2\to A$. The 2-simplices with given
2-boundary $\{A_{12},A_{02},A_{01}\}$ are morphisms $\varphi\colon
A_{02}A_{12} \to A_{01}A_{12}$ in \cc rendering commutative the diagrams   
$$
\scalebox{.98}{
\xymatrix@C=5pt{
A_{02}A_{12}A_{12} \ar[r]^-{\varphi 1} \ar[d]_-{1m_{12}} &
A_{01}A_{12}A_{12} \ar[d]^-{1m_{12}} \\
A_{02}A_{12} \ar[r]_-{\varphi} &
A_{01}A_{12}}
\quad
\xymatrix@C=1pt{
A_{02}A_{02}A_{12} \ar[d]_-{1\varphi} \ar[rrr]^-{m_{02}1} &&&
A_{02}A_{12} \ar[rrr]^-\varphi &&&
A_{01}A_{12} \\
A_{02}A_{01}A_{12} \ar[rr]_-{c1} &&
A_{01}A_{02}A_{12} \ar[rr]_-{1\varphi} &&
A_{01}A_{01}A_{12} \ar[rr]_-{c^{-1}1} &&
A_{01}A_{01}A_{12}. \ar[u]_-{m_{01}1}}
}
$$
For a given 3-boundary
$\{\varphi_{123},\varphi_{023},\varphi_{013},\varphi_{012}\}$ there is precisely
one filler if the fusion equation 
\begin{equation}  \label{eq:fusion}
\xymatrix@C=5pt{
A_{03}A_{13}A_{23} \ar[d]_-{1\varphi_{123}} \ar[rrr]^-{\varphi_{013}1} &&&
A_{01}A_{13}A_{23} \ar[rrr]^-{1\varphi_{123}} &&&
A_{01}A_{12}A_{23} \\
A_{03}A_{12}A_{23} \ar[rr]_-{c1} &&
A_{12}A_{03}A_{23} \ar[rr]_-{1\varphi_{023}} &&
A_{12}A_{02}A_{23} \ar[rr]_-{c^{-1}1} &&
A_{02}A_{12}A_{23} \ar[u]_-{\varphi_{012}1}}
\end{equation}
commutes --- in which case we write
$(\varphi_{123},\varphi_{023},\varphi_{013},\varphi_{012})$ for the filler ---
and there is no filler otherwise. All higher simplices are generated
coskeletally.  
 
The action of the face maps should be clear from the above presentation.
The unique degenerate 1-simplex is the monoidal unit $I$ of \cc (regarded as a
trivial semigroup), while for a 1-simplex $(A,m)$ the degenerate 2-simplices
$s_0(A,m)$ and $s_1(A,m)$ are given by 
$$
\xymatrix{
& \ast \ar[rd]^-A \ar@{}[d]|-m\\
\ast \ar[ru]^-{I} \ar[rr]_-A &&
\ast}
\quad \raisebox{-20pt}{$\textrm{and}$}\quad 
\xymatrix{
& \ast \ar[rd]^-I \ar@{}[d]|-1\\
\ast \ar[ru]^-{A} \ar[rr]_-A &&
\ast}
$$
respectively. On higher simplices the degeneracy maps are determined by the
uniqueness of the filler for a given boundary.  

\begin{remark}\label{rmk:widget} 
If the simplicial set $\mathsf M_1$ looks contrived, we can motivate it as
follows, based on the fusion equation \eqref{eq:fusion}. Suppose we were to
try to define a simplicial set with a unique 0-simplex, with objects of \cc as
1-simplices, with morphisms of the form  $\varphi\colon A_{02}A_{12}\to
A_{01}A_{12}$ as 2-simplices, and with 4-tuples
$(\varphi_{123},\varphi_{023},\varphi_{013},\varphi_{012})$  of 
2-simplices  satisfying the fusion equation  as 3-simplices,
and with higher simplices defined coskeletally. 
 Then we can define the last degeneracy map in each degree as in
the definition of $\mathsf M_1$, but the other degeneracy maps will not
exist. For a 1-simplex $A$, the degenerate 2-simplex $s_0(A)$ should be
a morphism $AA\to IA$ in \cc, so it will be available if we require that,
instead of being mere objects $A$ of \cc, each 1-simplex be equipped
with a morphism $m\colon A^2\to A$. For a 2-simplex $\varphi$ as
above, the existence of 3-simplices with the boundary appropriate for
$s_0( \varphi )$ and $s_1( \varphi)$ amounts to the
commutativity of the two diagrams in the definition of 2-simplices in $\mathsf
M_1$. Furthermore the  morphism $AA\to IA$ induced by $m$ will
satisfy these equations if and only if $m$ is associative. Thus in this sense,
the  form of the simplicial set $\mathsf M_1$ is forced upon us by
the fusion equation.     

More formally, we could proceed as follows. Any morphism in the simplex
category $\Delta$ has an epi-mono factorization.
Considering only those morphisms in $\Delta$ in whose factorization only the
last fibre of the epimorphism has more than one element, we obtain a 
subcategory to be denoted by $\nabla$. The construction of the 
previous paragraph determines a presheaf $\mathcal P_\cc$ on $\nabla$.

The inclusion functor $J \colon
\nabla\to \Delta$ induces a functor $J^*\colon [\Delta\op,\mathsf{Set}] \to
[\nabla\op, \mathsf{Set}]$ between the presheaf categories, possessing a right
adjoint given by the right Kan extension $\mathsf{Ran}_J$. 
Explicitly, $J^*$ takes a simplicial set to the presheaf on $\nabla$ obtained
by forgetting all but the last degeneracy map; and $\mathsf{Ran}_J$ takes a
presheaf $\mathsf X$ on $\nabla$ to the simplicial set whose $n$-simplices are
families $\{x_f\in \mathsf X_j \}$ labelled by morphisms $f\colon j\to n$ in
$\Delta$ such that for all morphisms $w$ of codomain $j$ in $\nabla$, the
identity $w^*(x_f)=w_{f.w}$ holds (where $w^*$ denotes the image of $w$ under
the functor $\mathsf X\colon \nabla\op \to \mathsf{Set}$). 

The value of $\mathsf{Ran}_J$ at the presheaf $\mathcal P_\cc$
on $\nabla$ is precisely $\mathsf M_1$.   
\end{remark}

\begin{proposition} \label{prop:M1}
For any braided monoidal category \cc, consider the associated simplicial set
$\mathsf M_1$ above. To give a simplicial map $\mathsf C \to \mathsf M_1$ is
the same as specifying an object $A$ in \cc equipped with both a semigroup
structure with multiplication $m\colon A^2\to A$ and a fusion morphism
$t\colon A^2 \to A^2$ with counit $e\colon A\to I$, subject to the following
compatibility relations.  
$$
\xymatrix{
A^3 \ar[r]^-{t1} \ar[d]_-{1m} \ar@{}[rd]|-{\textrm{(a)}}&
A^3 \ar[d]^-{1m} \\
A^2 \ar[r]_-t &
A^2} 
\xymatrix@C=1pt{
A^3 \ar[rrr]^-{m1} \ar[d]_-{1t} \ar@{}[rrrrrrd]|-{\textrm{(b)}}&&&
A^2 \ar[rrr]^-t &&&
A^2 \\
A^3 \ar[rr]_-{c1} &&
A^3 \ar[rr]_-{1t} &&
A^3 \ar[rr]_-{c^{-1}1} &&
A^3\ar[u]_-{m1}} 
\xymatrix{
A^2 \ar[r]^-{e1} \ar[d]_-m \ar@{}[rd]|-{\textrm{(c)}}&
A \ar[d]^-e \\
A \ar[r]_-e &
I}
\xymatrix@C=10pt{
A^3 \ar[rr]^-{m1} \ar[d]_-{1m} \ar@{}[rrd]|-{\textrm{(d)}}&&
A^2 \ar[d]^-m \\
A^2 \ar[r]_-t &
A^2 \ar[r]_-{e1} &
A}
$$
\end{proposition}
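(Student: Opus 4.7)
The plan is to unpack what a simplicial map $f \colon \mathsf C \to \mathsf M_1$ consists of, exploiting the fact that both $\mathsf C$ and $\mathsf M_1$ are $2$-coskeletal: above dimension $2$, every simplex is uniquely determined by its boundary, with a filler either existing uniquely or failing to exist. Consequently, $f$ is completely determined by its action on simplices of dimension at most $2$, subject to the requirement that, for every $3$-simplex of $\mathsf C$, the fusion equation \eqref{eq:fusion} hold for the image of its boundary in $\mathsf M_1$.

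First I would read off the basic data. The $0$-simplex $\ast$ of $\mathsf C$ must map to the unique $0$-simplex of $\mathsf M_1$; naturality in degeneracies forces $s_0(\ast)$ to map to the unique degenerate $1$-simplex $I$; and $\alpha$ may be sent to any semigroup $(A,m)$. Again by naturality, $s_0(\alpha)$ and $s_1(\alpha)$ are then forced to map to $m$ and $1_A$, respectively. The non-degenerate $2$-simplex $\tau$ maps to some $t\colon A^2\to A^2$, and the two diagrams required of the image $2$-simplex in $\mathsf M_1$ translate directly to conditions (a) and (b). Likewise, $\varepsilon$ maps to some $e\colon A \to I$; the first of the two $2$-simplex diagrams is automatic, while the second yields condition (c).

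Second, I would examine the $3$-simplex constraints. Since degenerate $3$-simplices of $\mathsf C$ map to automatically-existing degenerate $3$-simplices of $\mathsf M_1$, only the four non-degenerate $3$-simplices $\phi, \lambda, \varrho, \kappa$ impose genuine conditions. For each of them I would first determine the edges $A_{ij}$---each is either $A$ or $I$, according to whether the corresponding edge of the tetrahedron is $\alpha$ or $s_0(\ast)$---and then substitute into \eqref{eq:fusion}, simplifying using that braidings involving the unit $I$ are coherence isomorphisms. The expected outcomes are: from $\phi$ (all six edges $A$) we recover the fusion condition on $t$, i.e.\ the first diagram of \eqref{eq:fusion_morphism}; from $\lambda$ we obtain the counit equation $(1\otimes e)\circ t = 1\otimes e$; from $\varrho$ we obtain condition (d) directly; and from $\kappa$ we obtain $e \circ m = e\otimes e$, which is already a consequence of (c) and so imposes nothing new. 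The converse direction reverses this analysis: given data $(A,m,t,e)$ as in the proposition, one defines $f$ on $n$-simplices for $n\leq 2$ as above and extends uniquely by $2$-coskeletality, with the checks just performed ensuring that each required filler in $\mathsf M_1$ exists. I expect the main technical obstacle to be the bookkeeping in the $3$-simplex step: for each of the four tetrahedra, correctly identifying which of the six edges are $A$ and which are $I$, and then simplifying the resulting fusion diagram to recognize the structural equation it encodes.
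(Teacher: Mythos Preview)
Your proposal is correct and follows essentially the same approach as the paper's proof: identify the images of $\alpha$, $\tau$, $\varepsilon$ as the data $(A,m)$, $t$, $e$; read off (a), (b), (c) from the 2-simplex axioms of $\mathsf M_1$; and then check that the fusion equation \eqref{eq:fusion} applied to the boundaries of $\phi$, $\lambda$, $\varrho$, $\kappa$ yields, respectively, the fusion axiom for $t$, the counit axiom, condition (d), and condition (c) again. One terminological quibble: $\mathsf M_1$ is not $2$-coskeletal in the standard sense (not every $3$-boundary has a filler), it is $3$-coskeletal with a constraint at level $3$; your parenthetical ``a filler either existing uniquely or failing to exist'' is the correct statement, and is all that is actually used.
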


\begin{proof}
A simplicial map $\mathsf C \to \mathsf M_1$ is given by the images of the
non-degenerate 1-simplex $\alpha$ and of the non-degenerate 2-simplices $\tau$
and $\varepsilon$. This means, respectively, a semigroup 
$(A,m)$, a morphism $t\colon A^2\to A^2$ in \cc making the 
diagrams (a) and (b) in the claim commute, and a morphism $e\colon A 
\to I$ making diagram (c) commute. The simplicial map can be 
defined on the non-degenerate 3-simplex $\phi$ of $\mathsf C$ if and 
only if $t$ obeys the fusion equation in the first diagram of 
\eqref{eq:fusion_morphism}. It can be defined on the 3-simplex 
$\lambda$ if and only if the counitality condition in the second 
diagram of \eqref{eq:fusion_morphism} holds. It can be defined on 
$\varrho$ if and only if diagram (d) of the claim commutes, while for 
$\kappa$ we get the same condition encoded in diagram (c).  
\end{proof}

\begin{remark}\label{rem:non-deg}
If $t\colon A^2 \to A^2$ is a fusion morphism with counit $e \colon A \to I$
in \cc, then we have a semigroup $(A,m:=e1.t)$ in \cc for which all diagrams of
Proposition \ref{prop:M1} commute: (a), (b) and (c) can be found in (3.6),
(3.5) and (3.4) in \cite{BohmLack:multiplier_Hopf_monoid}, respectively, and
(d) follows by the associativity of $m=e1.t$. Hence there is a corresponding
simplicial map $\mathsf C \to \mathsf M_1$.  

However, there may be more general simplicial maps $\mathsf C \to \mathsf M_1$
for which the multiplication of the corresponding semigroup $(A,m)$ is
different from the multiplication $e1.t$ coming from the counital fusion 
morphism $(A,t,e)$.

Let us consider the particular kind of simplicial maps $\mathsf C \to \mathsf
M_1$ for which the multiplication $m$ happens to be {\em non-degenerate} in
the sense that both maps 
\begin{eqnarray*}
&\cc(X,AY) \to \cc(AX,AY) \qquad
&f \mapsto m1.1f \\
&\cc(X,YA) \to \cc(XA,YA) \qquad
&g \mapsto 1m.g1 
\end{eqnarray*}
are injective, for any objects $X$ and $Y$. Since by identities (a) and (d) in
Proposition \ref{prop:M1} 
$$
m.e11.t1= e1.1m.t1 = e1.t.1m = m.m1,
$$
we conclude that in this case $m=e1.t$. 

By the associativity of $m$ and commutativity of (d), $m=e1.t$ also follows if
$1m \colon A^3 \to A^2$ is an epimorphism.
\end{remark}

By the same construction as above, we can associate a 
simplicial set to the monoidal category $\cc\rev$, obtained from \cc by reversing the
monoidal product and using the same braiding $c$. The opposite of 
this simplicial set is called $\mathsf M_2$. Explicitly,  $\mathsf M_2$
also has a single 0-simplex $\ast$ and the semigroups in $\cc$ as
1-simplices. The 2-simplices of a given 2-boundary $\{A_{12},A_{02},A_{01}\}$
are now morphisms $\psi \colon A_{01}A_{02} \to A_{01}A_{12}$ in \cc making the
diagrams 
$$
\scalebox{.98}{
\xymatrix@C=5pt{
A_{01}A_{01}A_{02} \ar[r]^-{1\psi} \ar[d]_-{m_{01}1} &
A_{01}A_{01}A_{12} \ar[d]^-{m_{01}1} \\
A_{01}A_{02} \ar[r]_-{\psi} &
A_{01}A_{12}}
\quad
\xymatrix@C=1pt{
A_{01}A_{02}A_{02} \ar[d]_-{\psi 1} \ar[rrr]^-{1 m_{02}} &&&
A_{01}A_{02} \ar[rrr]^-\psi &&&
A_{01}A_{12} \\
A_{01}A_{12}A_{02} \ar[rr]_-{1c} &&
A_{01}A_{02}A_{12} \ar[rr]_-{\psi 1} &&
A_{01}A_{12}A_{12} \ar[rr]_-{1c^{-1}} &&
A_{01}A_{12}A_{12} \ar[u]_-{1m_{12}}}}
$$
commute.
For a given 3-boundary
$\{\psi_{123},\psi_{023},\psi_{013},\psi_{012}\}$ there is precisely one filler if 
$$ 
\xymatrix@C=5pt{
A_{01}A_{02}A_{03} \ar[d]_-{\psi_{012}1} \ar[rrr]^-{1\psi_{023}} &&&
A_{01}A_{02}A_{23} \ar[rrr]^-{\psi_{012}1} &&&
A_{01}A_{12}A_{23} \\
A_{01}A_{12}A_{03} \ar[rr]_-{1c} &&
A_{01}A_{03}A_{12} \ar[rr]_-{\psi_{013}1} &&
A_{01}A_{13}A_{12} \ar[rr]_-{1 c^{-1}} &&
A_{01}A_{12}A_{13} \ar[u]_-{1\psi_{123}}}
$$
commutes --- in which case we write
$(\psi_{123},\psi_{023},\psi_{013},\psi_{012})$ for the filler ---
and there is no filler otherwise. All higher simplices are generated
coskeletally. The unique degenerate 1-simplex is again the monoidal unit $I$
of \cc (regarded as a trivial semigroup), while for a 1-simplex $(A,m)$ the
degenerate  2-simplices $s_0(A,m)$ and $s_1(A,m)$ are given by 
$$
\xymatrix{
& \ast \ar[rd]^-A \ar@{}[d]|-1\\
\ast \ar[ru]^-I \ar[rr]_-A &&
\ast} 
\quad \raisebox{-20pt}{$\textrm{and}$}\quad 
\xymatrix{
& \ast \ar[rd]^-I \ar@{}[d]|-m\\
\ast \ar[ru]^-A \ar[rr]_-A &&
\ast}
$$
respectively; note that the roles of $s_0$ and $s_1$ have been
interchanged. As before, on the higher simplices the degeneracy maps are 
determined by the uniqueness of the filler for a given boundary.

Since the simplicial set $\mathsf C$ is isomorphic to its opposite, 
Proposition \ref{prop:M1} then characterizes the simplicial 
maps $\mathsf C \to \mathsf M_2$ as objects $A$ carrying the 
compatible structures of a semigroup, and a counital fusion morphism 
in $\cc\rev$; once again, the roles of $\lambda$ and $\rho$ have been 
interchanged relative to the case of $\mathsf M_1$. 

As a further possibility, we can use the above construction to associate a
simplicial set $\mathsf M_3$ to the monoidal category $\overline \cc$,
obtained from \cc by keeping the same monoidal product but replacing the
braiding $c$ with $c^{-1}$, and using the twisted multiplication $m.c^{-1}$
for a 1-simplex (that is, semigroup) $(A,m)$, so that in particular 
the degenerate 2-simplex $s_0(A,m)$ is given by $m.c^{-1}$. 
Proposition \ref{prop:M1} can also be used to describe the simplicial 
maps $\mathsf C \to \mathsf M_3$.  

Finally, applying the above construction to the braided monoidal category
$(\overline \cc)\rev = \overline{\cc \rev}$ we obtain a simplicial set
$\mathsf M_4$. 

\subsection{Multiplier bimonoids} \label{sect:mbm}

A {\em multiplier bimonoid} \cite{BohmLack:braided_mba} in a braided monoidal
category \cc consists of a fusion morphism $t_1$ in \cc and a fusion morphism
$t_2$ in $\cc\rev$ with a common counit $e \colon A \to I$ such that the
diagrams 
\begin{equation}\label{eq:mbm}
\xymatrix{
A^3 \ar[r]^-{t_2 1} \ar[d]_-{1 t_1} &
A^3 \ar[d]^-{1 t_1} \\
A^3 \ar[r]_-{t_2 1} &
A^3}\qquad
\xymatrix{
A^2 \ar[r]^-{t_2} \ar[d]_-{t_1} &
A^2 \ar[d]^-{1e} \\
A^2 \ar[r]_-{e1} &
A}
\end{equation}
commute. Thus by Remark \ref{rem:non-deg}, it can be thought of as a pair of
simplicial maps $\mathsf C \to \mathsf M_1$ and $\mathsf C \to \mathsf M_2$
subject to compatibility conditions expressing the fact that the underlying
semigroups and the counits are equal, and the diagrams in \eqref{eq:mbm}
commute, with the common diagonal of the second of these given by the
multiplication. Guided by this fact, we construct below a simplicial set 
$\mathsf M_{12}$ whose simplices are suitably compatible pairs consisting of a
simplex in $\mathsf M_1$ and a simplex in $\mathsf M_2$. We prove that a
simplicial map $\mathsf C \to \mathsf M_{12}$ is the same thing as a
multiplier bimonoid in \cc.  

The simplicial set $\mathsf M_{12}$ has a single 0-simplex $\ast$ and the
semigroups of \cc as 1-simplices. The 2-simplices are pairs $(\varphi\vert
\psi)$ consisting of a 2-simplex $\varphi$ of $\mathsf M_1$ and a 2-simplex
$\psi$ of $\mathsf M_2$ with common boundary $\{A_{12},A_{02},A_{01}\}$, such
that the diagram 
\begin{equation}\label{eq:M_12_2-simplex}
\xymatrix{
A_{01}A_{02}A_{12} \ar[r]^-{1 \varphi} \ar[d]_-{\psi 1} &
A_{01}A_{01}A_{12} \ar[d]^-{m_{01}1} \\
A_{01}A_{12}A_{12} \ar[r]_-{1m_{12}} &
A_{01} A_{12}}
\end{equation}
commutes. The 3-simplices are pairs
$(\varphi_{123},\varphi_{023},\varphi_{013},\varphi_{012} \vert 
\psi_{123},\psi_{023},\psi_{013},\psi_{012})$ consisting of a 3-simplex
$(\varphi_{123},\varphi_{023},\varphi_{013},\varphi_{012})$ in $\mathsf M_1$
and a 3-simplex $(\psi_{123},\psi_{023},\psi_{013},\psi_{012})$ in $\mathsf
M_2$  such that $(\phi_{ijk}\vert\psi_{ijk})$ is a 2-simplex in $\mathsf
M_{12}$ for each $ijk$, and the diagram
\begin{equation}\label{eq:M_12_3-simplex}
\xymatrix{
A_{01}A_{03}A_{23} \ar[r]^-{1 \varphi_{023}} \ar[d]_-{\psi_{013} 1} &
A_{01}A_{02}A_{23} \ar[d]^-{\psi_{012}1} \\
A_{01}A_{13}A_{23} \ar[r]_-{1\varphi_{123}} &
A_{01} A_{12}A_{23}}
\end{equation}
commutes. The face and degeneracy maps act on the pairs componentwise, and
higher simplices are defined coskeletally. 

\begin{remark} \label{rem:M_12}
We explained in Remark~\ref{rmk:widget} a sense in which the simplicial set
$\ssm_1$ is dictated by the fusion equation; in particular, the associativity
of the multiplications in the 1-simplices and the commutativity of the
diagrams in the definition of the 2-simplices are required in order for
various degenerate 3-simplices to satisfy the fusion equation. 

The case of $\ssm_{12}$ is similar: 
it has the same 1-simplices as $\ssm_1$, and once we impose
commutativity of \eqref{eq:M_12_3-simplex} on the 3-simplices, any 2-simplex $(\varphi\vert \psi)$ must obey
\eqref{eq:M_12_2-simplex} in order to have a 3-simplex with the boundary of $s_1(\varphi\vert \psi)$.  
\end{remark}

\begin{theorem} \label{thm:M12}
There is a bijection between simplicial maps $\mathsf C \to \mathsf M_{12}$
and multiplier bimonoids in \cc. 
\end{theorem}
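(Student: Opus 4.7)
The plan is to decompose the problem via the two simplicial projections $\pi_i\colon \ssm_{12} \to \ssm_i$ ($i=1,2$) that send a pair $(\varphi\vert\psi)$ to its $i$-th component; these are simplicial maps since face and degeneracy maps in $\ssm_{12}$ act componentwise. Given a simplicial map $F\colon \mathsf C \to \ssm_{12}$, the compositions $F_i := \pi_i\circ F$ are simplicial maps $\mathsf C \to \ssm_i$. Applying Proposition~\ref{prop:M1} to $F_1$ produces a semigroup $(A,m)$ in $\cc$ and a fusion morphism $t_1\colon A^2 \to A^2$ with counit $e_1\colon A \to I$ obeying the compatibilities of that proposition; applying the analogous result to $F_2$ produces the same underlying semigroup $(A,m)$ (since the 1-simplices of $\ssm_{12}$ are simultaneously 1-simplices of $\ssm_1$ and $\ssm_2$) together with a fusion morphism $t_2$ in $\cc\rev$ with counit $e_2\colon A \to I$.

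The remaining content is translating the cross-compatibility conditions that distinguish $\ssm_{12}$ from $\ssm_1 \times \ssm_2$ into the axioms of a multiplier bimonoid. Applying \eqref{eq:M_12_2-simplex} to the image of $\varepsilon$ (whose boundary is $(A,A,I)$) should force the two counits to agree, $e_1 = e_2 =: e$. Applying \eqref{eq:M_12_2-simplex} to the image of $\tau$ (boundary $(A,A,A)$) yields $(m\ox 1)\circ(1\ox t_1) = (1\ox m)\circ(t_2\ox 1)$, and post-composing with $1\ox e$ and invoking counitality recovers the second diagram of \eqref{eq:mbm}. Applying \eqref{eq:M_12_3-simplex} to the image of $\phi = (\tau,\tau,\tau,\tau)$ yields directly the commuting-fusion identity that is the first diagram of \eqref{eq:mbm}. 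One must also verify that the cross-compatibilities imposed by the remaining non-degenerate 3-simplices $\lambda,\varrho,\kappa$ introduce no new conditions; these all involve degenerate 2-simplex faces and should reduce to instances of the above.

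Conversely, starting from a multiplier bimonoid $(A,m,t_1,t_2,e)$, I would define $F$ on the non-degenerate simplices of $\mathsf C$ by $F(\alpha) = (A,m)$, $F(\tau) = (t_1 \vert t_2)$, $F(\varepsilon)$ encoding the common counit $e$, and $F(\phi),F(\lambda),F(\varrho),F(\kappa)$ by the corresponding filler pairs of $\ssm_{12}$; the remaining simplicial data are then forced by coskeletality and the uniqueness of fillers in both simplicial sets. Verifying the validity of these simplices in $\ssm_{12}$ reduces componentwise to Proposition~\ref{prop:M1}, while the cross-compatibilities \eqref{eq:M_12_2-simplex} and \eqref{eq:M_12_3-simplex} are restatements of the multiplier bimonoid axioms. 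The two constructions are mutually inverse essentially by construction. The main technical obstacle lies in the counit-agreement step: the compatibility \eqref{eq:M_12_2-simplex} applied to $\varepsilon$ relates morphisms into $IA$ rather than into $I$, so one must carefully combine it with the degenerate 3-simplex conditions from $\kappa$ to conclude that the two encodings determine the same underlying map $A \to I$, rather than merely two distinct data that encode compatible counits after some identification.
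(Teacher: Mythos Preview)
Your overall strategy---factoring through the projections $\pi_i\colon\ssm_{12}\to\ssm_i$ and then isolating the extra cross-compatibilities---is precisely the paper's approach. The gap lies in your treatment of the non-degenerate 3-simplices $\lambda$ and $\varrho$.

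You claim that the instances of \eqref{eq:M_12_3-simplex} coming from $\lambda,\varrho,\kappa$ ``should reduce to instances of the above''. They do not. Evaluating \eqref{eq:M_12_3-simplex} on the image of $\lambda$ (whose edges give $A_{01}=A_{02}=A_{03}=A_{13}=A$ and $A_{12}=A_{23}=I$) yields exactly $m=1e.t_2$; the analogous computation for $\varrho$ yields $m=e1.t_1$. Together these \emph{are} the second diagram of \eqref{eq:mbm}, with common diagonal $m$. Your proposed alternative derivation---post-compose condition~(e), that is $m1.1t_1=1m.t_21$, with $1\otimes e$ and use counitality of $t_1$ together with (c)---only produces
\[
m\otimes e \;=\; (1e.t_2)\otimes e
\]
as morphisms $A^2\otimes A\to A\otimes I$, and since $A$ carries no unit there is no way to cancel the factor $e$. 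So the second diagram of \eqref{eq:mbm} does not follow from the 2-simplex compatibility on $\tau$. Worse, without $m=e1.t_1=1e.t_2$ you cannot prove that your map from simplicial maps to multiplier bimonoids is injective: the 1-simplex carries an a priori independent semigroup multiplication $m$, and it is precisely the conditions coming from $\lambda$ and $\varrho$ that force $m$ to agree with the multiplication induced by the fusion data. (Remark~\ref{rem:non-deg} makes this point for $\ssm_1$ alone: condition~(d) gives only $m.1m=e1.t_1.1m$, which is not enough.)

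A minor correction: the 2-boundary of the image of $\varepsilon$ is $(I,A,I)$, not $(A,A,I)$. With the correct boundary, the diagram \eqref{eq:M_12_2-simplex} for $\varepsilon$ is literally the equality $e_1=e_2$ of morphisms $A\to I$, so the ``technical obstacle'' you anticipated in the counit step does not arise; the genuine subtlety is in the multiplication, not the counit.
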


\begin{proof}
Again, a simplicial map $\mathsf C \to \mathsf M_{12}$ is given by the images
$(A,m)$ of the non-degenerate 1-simplex $\alpha$, $(t_1\vert t_2)$ of the
non-degenerate 2-simplex $\tau$ and $(e_1\vert e_2)$ of the non-degenerate
2-simplex $\varepsilon$. By Proposition \ref{prop:M1} $(t_1,e_1)$ is a
counital fusion morphism in \cc obeying conditions (a)-(d); and $(t_2,e_2)$ is
a counital fusion morphism in $\cc\rev$ obeying symmetric counterparts of
conditions (a)-(d). Furthermore, there are compatibility conditions between
them: $(t_1\vert t_2)$ is a 2-simplex of $\mathsf M_{12}$ if and only if 
diagram (e) in
$$
\xymatrix{
A^3 \ar[r]^-{1 t_1} \ar[d]_-{t_2 1} \ar@{}[rd]|-{\textrm{(e)}} &
A^3 \ar[d]^-{m1}\\
A^3\ar[r]_-{1m} &
A^2}
\xymatrix{
A \ar[r]^-{e_1} \ar[d]_-{e_2} \ar@{}[rd]|-{\textrm{(f)}} &
I \ar@{=}[d] \\
I \ar@{=}[r] &
I}
\xymatrix{
A^3 \ar[r]^-{1 t_1} \ar[d]_-{t_2 1} \ar@{}[rd]|-{\textrm{(g)}} &
A^3 \ar[d]^-{t_2 1}\\
A^3\ar[r]_-{1t_1} &
A^3}
\xymatrix{
A^2 \ar@{=}[r] \ar[d]_-{t_2} \ar@{}[rd]|-{\textrm{(h)}} &
A^2 \ar[d]^-{m}\\
A^2\ar[r]_-{1e_1} &
A}
\xymatrix{
A^2 \ar@{=}[d] \ar[r]^-{t_1} \ar@{}[rd]|-{\textrm{(i)}} &
A^2 \ar[d]^-{e_2 1}\\
A^2\ar[r]_-{m} &
A}
$$
commutes and $(e_1\vert e_2)$ is a 2-simplex of $\mathsf M_{12}$ if and only
if (f) does so.
The simplicial map is well-defined on the non-degenerate 3-simplices $\phi$,
$\lambda$, $\varrho$ and $\kappa$ if and only if the respective diagrams (g),
(h), (i) and (f) again commute.

From (f) we infer that the counits $e_1$ and $e_2$ are equal so we will denote
them simply by $e$. Then (h) and (i) take the equivalent form in the second
diagram of \eqref{eq:mbm}, with common diagonal $m$.
As observed in Remark \ref{rem:non-deg}, from this it follows that all
identities (a)-(d), as well as their symmetric counterparts, hold
true. Diagram (g) is identical to the first diagram of \eqref{eq:mbm}; this 
implies (e) upon postcomposing by $1e1$. 

Summarizing, a simplicial map $\mathsf C \to \mathsf M_{12}$ is the same thing
as a pair of counital fusion morphism $(t_1,e)$ in \cc and a counital fusion
morphism $(t_2,e)$ in $\cc\rev$ (with common counit $e$) rendering commutative
the diagrams of \eqref{eq:mbm}.
\end{proof}

Applying the construction of this section to the braided monoidal category
$\overline \cc$, and using the reversed multiplications $m.c^{-1}$ of
the semigroups $(A,m)$, we obtain a simplicial set $\mathsf M_{34}$.

\subsection{Regular multiplier bimonoids}

A {\em regular multiplier bimonoid} \cite{BohmLack:braided_mba} in a braided
monoidal category $\cc$ is a tuple $(A,t_1,t_2,t_3,t_4,e)$ such that
$(A,t_1,t_2,e)$ is a multiplier bimonoid in \cc and $(A,t_3,t_4,e)$ is a
multiplier bimonoid in $\overline \cc$, and such that the following diagrams
commute, where $m$ stands for the common diagonal of the first diagram.   
\begin{equation}\label{eq:reg_mbm}
\xymatrix{
A^2 \ar[r]^-{t_1} \ar[d]_-c &
A^2 \ar[dd]^-{e1}
&
A^3 \ar[r]^-{1t_1} \ar[d]_-{c1} &
A^3 \ar[d]^-{c1}
&
A^3 \ar[r]^-{1t_1} \ar[dd]_-{t_41} &
A^3 \ar[dd]^-{t_41}
&
A^3 \ar[r]^-{t_21} \ar[d]_-{1c} &
A^3 \ar[d]^-{1c} 
&
A^3 \ar[r]^-{t_21} \ar[dd]_-{1t_3} &
A^3 \ar[dd]^-{1t_3} \\
A^2 \ar[d]_-{t_3} &
&
A^3 \ar[d]_-{t_31} &
A^3 \ar[d]^-{1m}
&
&&
A^3 \ar[d]_-{1t_4} &
A^3 \ar[d]^-{m1} \\
A^2 \ar[r]_-{e1} &
A
&
A^3 \ar[r]_-{1m} &
A^2
&
A^3 \ar[r]_-{1t_1} &
A^3
&
A^3 \ar[r]_-{m1} &
A^2
&
A^3 \ar[r]_-{t_21} &
A^3}
\end{equation}
We shall classify regular multiplier bimonoids via simplicial maps from
$\mathsf C$ to a simplicial set $\mathsf M$ which we now describe.

The simplicial set $\mathsf M$ has a single 0-simplex $\ast$, and its
1-simplices are the semigroups in the braided monoidal category \cc. The
2-simplices are pairs $(\varphi\vert \psi \vert\!\vert 
\varphi' \vert \psi')$ consisting of a 2-simplex $(\varphi \vert \psi)$ of
$\mathsf M_{12}$ and a 2-simplex $(\varphi' \vert \psi')$ of $\mathsf M_{34}$
with common boundary $(A,B,C)$, obeying the compatibility conditions 
$$
\xymatrix@C=25pt{
CBA \ar[r]^-{1\varphi} \ar[dd]_-{\psi' 1} &
C^2A \ar[d]^-{c^{-1} 1} \\
& 
C^2A
\ar[d]^-{m 1} \\
CA^2 \ar[r]_-{1m} &
CA}
\xymatrix@C=25pt{
CBA \ar[r]^-{\psi 1} \ar[dd]_-{1 \varphi'} &
CA^2 \ar[d]^-{1 c^{-1}} \\
& 
CA^2 \ar[d]^-{1 m} \\
C^2A \ar[r]_-{m 1} &
CA}
\xymatrix@C=25pt{
ABA \ar[r]^-{1\varphi} \ar[d]_-{c 1} &
ACA \ar[d]^-{c 1} \\
BA^2 \ar[d]_-{\varphi' 1} & 
CA^2 \ar[d]^-{1 m} \\
C A^2 \ar[r]_-{1m} &
CA}
\xymatrix@C=25pt{
CBC \ar[r]^-{\psi 1} \ar[d]_-{1 c} &
CAC \ar[d]^-{1 c} \\
C^2B \ar[d]_-{1 \psi'} & 
C^2A \ar[d]^-{m 1} \\
C^2A \ar[r]_-{m1} &
CA.}
$$
The 3-simplices are pairs 
$$(\varphi_{123},\varphi_{023},\varphi_{013},\varphi_{012} \vert \psi_{123},
\psi_{023},\psi_{013},\psi_{012} \vert \! \vert \varphi'_{123}, \varphi'_{023}, 
\varphi'_{013}, \varphi'_{012} \vert
\psi'_{123},\psi'_{023},\psi'_{013},\psi'_{012})$$   
consisting of a 3-simplex
$(\varphi_{123},\varphi_{023},\varphi_{013},\varphi_{012} \vert
\psi_{123},\psi_{023},\psi_{013},\psi_{012})$ of $\mathsf M_{12}$ and a 
3-simplex $(\varphi'_{123}, \varphi'_{023},\varphi'_{013},\varphi'_{012} \vert
\psi'_{123},\psi'_{023}, \psi'_{013}, \psi'_{012})$ of $\mathsf M_{34}$ for
which $(\varphi_{ijk}\vert \psi_{ijk} \vert\!\vert \varphi'_{ijk} \vert
\psi'_{ijk})$ is a 2-simplex in $\mathsf M$ for every $0\leq i<j<k\leq 3$ and
the diagrams    
\begin{equation}\label{eq:M_3-simplex}
\xymatrix{
A_{01}A_{03}A_{23} \ar[r]^-{1\varphi_{023}} \ar[d]_-{\psi'_{013} 1} &
A_{01}A_{02}A_{23} \ar[d]^-{\psi'_{012} 1} \\
A_{01}A_{13}A_{23} \ar[r]_-{1\varphi_{123}} &
A_{01}A_{12}A_{23}}
\qquad
\xymatrix{
A_{01}A_{03}A_{23} \ar[r]^-{\psi_{013} 1} \ar[d]_-{1 \varphi'_{023}} &
A_{01}A_{13}A_{23} \ar[d]^-{1 \varphi'_{123}} \\
A_{01}A_{02}A_{23} \ar[r]_-{\psi_{012} 1} &
A_{01}A_{12}A_{23}}
\end{equation}
commute. The higher simplices are generated coskeletally and the 
face and the degeneracy maps act on the pairs memberwise.

\begin{theorem}\label{thm:M}
There is a bijection between simplicial maps $\mathsf C \to \mathsf M$ and
regular multiplier bimonoids in \cc.
\end{theorem}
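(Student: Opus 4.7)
The approach closely mirrors that of Theorem \ref{thm:M12}, using it as a black box applied twice. A simplicial map $\mathsf C \to \mathsf M$ is determined by its values on the non-degenerate 1- and 2-simplices $\alpha$, $\tau$, $\varepsilon$, subject to well-definedness on the non-degenerate 3-simplices $\phi$, $\lambda$, $\varrho$, $\kappa$. Since simplices of $\mathsf M$ are by definition compatible pairs of simplices of $\mathsf M_{12}$ and $\mathsf M_{34}$, this data splits: the image of $\alpha$ is a semigroup $(A,m)$, the image of $\tau$ is a quadruple $(t_1\vert t_2\vert\!\vert t_3\vert t_4)$, and the image of $\varepsilon$ is a quadruple $(e_1\vert e_2\vert\!\vert e_3\vert e_4)$.

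First I would apply Theorem \ref{thm:M12} to each of the two components in turn. The $\mathsf M_{12}$-component yields a multiplier bimonoid $(A,t_1,t_2,e)$ in \cc with $e := e_1 = e_2$ and $m = e1.t_1 = 1e.t_2$. Applying the same theorem to $\overline{\cc}$, whose semigroups carry the twisted multiplication $m.c^{-1}$, the $\mathsf M_{34}$-component yields a multiplier bimonoid $(A,t_3,t_4,e')$ in $\overline{\cc}$ with $e' := e_3 = e_4$ and $m.c^{-1} = e'1.t_3 = 1e'.t_4$. Evaluating the four 2-simplex compatibility diagrams of $\mathsf M$ on $\varepsilon$ (all of whose non-trivial edges involve the monoidal unit $I$) degenerates the sides into equations forcing $e_1 = e_2 = e_3 = e_4$; write $e$ for this common value. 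The leftmost diagram of \eqref{eq:reg_mbm} then follows immediately by combining $m = e1.t_1$ with $m.c^{-1} = e1.t_3$.

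It remains to match the four 2-simplex compatibility diagrams of $\mathsf M$ evaluated on $\tau$, together with the 3-simplex conditions \eqref{eq:M_3-simplex} evaluated on $\phi$, $\lambda$, $\varrho$, $\kappa$, against the four structural diagrams (second through fifth columns) of \eqref{eq:reg_mbm}. I expect a direct correspondence modulo the multiplier bimonoid structures already in place, with some identities requiring manipulation analogous to the derivation of condition (e) from condition (g) in the proof of Theorem \ref{thm:M12} --- for instance, postcomposition by $1e1$ or precomposition by a braiding may be needed to bring the two sides into the required form.

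The main obstacle is careful bookkeeping: confirming that each of the four compatibility diagrams (evaluated at $A_{01} = A_{02} = A_{12} = A$ and $\varphi = t_1$, $\psi = t_2$, $\varphi' = t_3$, $\psi' = t_4$) and each 3-simplex condition either reduces to a diagram of \eqref{eq:reg_mbm} or is already entailed by the others, and that no further conditions appear. The converse direction, from a regular multiplier bimonoid to a simplicial map, proceeds by the same dictionary in reverse: the data directly determine the images of $\alpha$, $\tau$, $\varepsilon$; the identities \eqref{eq:reg_mbm} ensure these assemble into 2-simplices of $\mathsf M$ and extend to the non-degenerate 3-simplices; and coskeletality of $\mathsf M$ determines all higher-dimensional images uniquely.
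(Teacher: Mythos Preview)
Your proposal is correct and follows essentially the same route as the paper: apply Theorem~\ref{thm:M12} to the $\mathsf M_{12}$ and $\mathsf M_{34}$ components separately, extract $e=e'$ from the $\varepsilon$-compatibility (the paper's condition (n)), and then match the $\tau$-compatibility conditions and the diagrams \eqref{eq:M_3-simplex} on $\phi,\lambda,\varrho,\kappa$ against \eqref{eq:reg_mbm}, discarding redundancies by postcomposition with $1e1$ exactly as you anticipate. One small correction: $e_1=e_2$ and $e_3=e_4$ already follow from Theorem~\ref{thm:M12} applied componentwise (its condition (f)), so only the cross-identification $e=e'$ comes from the four $\mathsf M$-compatibility diagrams evaluated at $\varepsilon$.
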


\begin{proof}
For a simplicial map $\mathsf C \to \mathsf M$, denote the image of the
2-simplex $\alpha$ by $(A,m)$, and denote the images of the 3-simplices $\tau$
and $\varepsilon$ by $(t_1\vert t_2 \vert\!\vert t_3 \vert t_4)$ and $(e\vert
e \vert\!\vert e' \vert e')$, respectively. By Theorem \ref{thm:M12},
$(t_1,t_2,e)$ is a multiplier bimonoid in \cc for which $m=e1.t_1=1e.t_2$, and
$(t_3,t_4,e')$ is a multiplier bimonoid in $\cc\rev$ for which
$m.c^{-1}=e'1.t_3=1e'.t_4$. Furthermore, from the requirements that $(t_1\vert
t_2 \vert\!\vert t_3 \vert t_4)$ and $(e\vert e \vert\!\vert e' \vert e')$ be
2-simplices of $\mathsf M$ we obtain the following identities.   
$$
\xymatrix{
A^3 \ar[r]^-{1t_1} \ar[dd]_-{t_41} \ar@{}[rdd]|-{\textrm{(j)}} &
A^3 \ar[d]^-{c^{-1}1} \\
& A^3 \ar[d]^-{m1} \\
A^3 \ar[r]_-{1m} &
A^2}
\xymatrix{
A^3 \ar[r]^-{t_2 1} \ar[dd]_-{1 t_3} \ar@{}[rdd]|-{\textrm{(k)}} &
A^3 \ar[d]^-{1 c^{-1}} \\
& A^3 \ar[d]^-{1 m} \\
A^3 \ar[r]_-{m1} &
A^2}
\xymatrix{
A^3 \ar[r]^-{1t_1} \ar[d]_-{c1} \ar@{}[rdd]|-{\textrm{(l)}} &
A^3 \ar[d]^-{c 1} \\
A^3 \ar[d]_-{t_31} & 
A^3 \ar[d]^-{1m} \\
A^3 \ar[r]_-{1m} &
A^2}
\xymatrix{
A^3 \ar[r]^-{t_2 1} \ar[d]_-{1 c} \ar@{}[rdd]|-{\textrm{(m)}} &
A^3 \ar[d]^-{1 c} \\
A^3 \ar[d]_-{1t_4} & 
A^3 \ar[d]^-{m1} \\
A^3 \ar[r]_-{m1} &
A^2}
\xymatrix@R=68pt{
A \ar[r]^-e \ar[d]_-{e'} \ar@{}[rd]|-{\textrm{(n)}} &
I \ar@{=}[d] \\
I \ar@{=}[r] &
I.}
$$
The boundaries of the images of the 3-simplices $\phi$, $\lambda$, $\varrho$
and $\kappa$ under a simplicial map are determined. The fillers to be
their images exist if and only if the diagrams
$$
\xymatrix@R=30pt @C=30pt{
A^3 \ar[r]^-{1t_1} \ar[d]_-{t_4 1} \ar@{}[rd]|-{\textrm{(o)}} &
A^3 \ar[d]^-{t_4 1} \\
A^3 \ar[r]_-{1t_1} &
A^3} \ \ 
\xymatrix@R=30pt @C=30pt{
A^3 \ar[r]^-{t_2 1} \ar[d]_-{1 t_3} \ar@{}[rd]|-{\textrm{(p)}} &
A^3 \ar[d]^-{1 t_3} \\
A^3 \ar[r]_-{t_2 1} &
A^3} \ \ 
\xymatrix@R=30pt @C=30pt{
A^2 \ar[r]^-{t_1} \ar[d]_-{t_2} 
\ar[rd]|(.35)m ^(.6){\textrm{(q)}}_(.6){\textrm{(r)}} &
A^2 \ar[d]^-{e' 1} \\
A \ar[r]_-{1e'} &
A} \ \ 
\xymatrix@R=30pt @C=30pt{
A^2 \ar[r]^-{t_3} \ar[d]_-{t_4} 
\ar[rd]|(.35){\ m.c^{-1}} ^(.6){\textrm{(s)}}_(.6){\textrm{(t)}}&
A^2 \ar[d]^-{e 1} \\
A \ar[r]_-{1 e} &
A}
$$
commute (in the case of $\kappa$ the same condition (n) occurs again). 

Conditions (l), (o), (m) and (p) are identical to the last four diagrams of 
\eqref{eq:reg_mbm}. In light of (n), conditions (q), (r), (s) and (t)
are redundant. Conditions (j) and (k) are also redundant:
they follow from (o) and (p), respectively, postcomposing them with $1e1=1e'
1$. Finally (n) implies the commutativity of the first diagram of
\eqref{eq:reg_mbm}, with common diagonal $m$.  
\end{proof}

\begin{remark} 
We discussed in Remark~\ref{rmk:widget} and Remark ~\ref{rem:M_12} a sense in
which the definitions of 2-simplices in $\mathsf M_1$ and $\mathsf M_{12}$ are
dictated by the definitions of the respective 3-simplices. 

When it comes to \ssm, however, this property breaks down. While commutativity
of the first two diagrams in the definition of 2-simplices in \ssm is needed
in order for degenerate 3-simplices to exist in \ssm, commutativity of the
other two diagrams is not. Commutativity of these last two diagrams would be
needed, though, if we were to modify the definition of 3-simplices so as to
require that, in addition to the diagrams of \eqref{eq:M_3-simplex},
also 
\begin{gather*}
\xymatrix@C=5pt{
A_{13}A_{03}A_{23} \ar[rr]^-{1\varphi_{023}} \ar[d]_-{c1} &&
A_{13}A_{02}A_{23} \ar[rr]^-{c1} &&
A_{02}A_{13}A_{23} \ar[rr]^-{1\varphi_{123}} &&
A_{02}A_{12}A_{23} \ar[d]^-{\varphi'_{012} 1} \\
A_{03}A_{13}A_{23} \ar[rrr]_-{\varphi'_{013} 1} &&&
A_{01}A_{13}A_{23} \ar[rrr]_-{1\varphi_{123}} &&&
A_{01}A_{12}A_{23}}
\\
\xymatrix@C=5pt{
A_{01}A_{03}A_{02} \ar[rr]^-{\psi_{013}1} \ar[d]_-{1c} &&
A_{01}A_{13}A_{02} \ar[rr]^-{1c} &&
A_{01}A_{02}A_{13} \ar[rr]^-{\psi_{012}1} &&
A_{01}A_{12}A_{13} \ar[d]^-{1\psi'_{123}} \\
A_{01}A_{02}A_{03} \ar[rrr]_-{1\psi'_{023}} &&&
A_{01}A_{02}A_{23} \ar[rrr]_-{\psi_{012}1} &&&
A_{01}A_{12}A_{23}}
\end{gather*}
commute. 

Simplicial maps from \ssc to the resulting simplicial set would now correspond
to a stronger notion of regular multiplier bimonoid, in which the second
diagram of \eqref{eq:reg_mbm} is replaced by the fusion equation in the second
diagram of \cite[Remark 3.10]{BohmLack:braided_mba}, and with an analogous
change to the fourth diagram of \eqref{eq:reg_mbm}. Although in  {\em general}
this would result in a stronger notion of regular multiplier bimonoid, the
difference would disappear in the case where the multiplication is
non-degenerate, and it was already anticipated in
\cite[Remark~3.10]{BohmLack:braided_mba} that in the not necessarily
non-degenerate case such strengthenings might be needed.  
\end{remark}

\subsection{Multiplier bimonoids which are comonoids}

In our paper \cite{BohmLack:cat_of_mbm}, following some ideas in
\cite{JanssenVercruysse:mba&mha}, we associated to any braided monoidal
category \cc a monoidal category \cm, and we described a correspondence
between certain multiplier bimonoids in \cc  and certain comonoids in \cm
\cite[Theorem~5.1]{BohmLack:cat_of_mbm}. In this section, we explain this
correspondence in terms of simplicial maps and the Catalan simplicial set.  

A comonoid in the monoidal category \cm is the same as a monoid in the
monoidal category $\cm\op$. We now form the nerve $\mathsf N(\cm\op)$ of the
monoidal category $\cm\op$, as in Section~\ref{sect:nerve}; this is not to be
confused with the nerve of the underlying category of $\cm\op$.  
As explained in \cite{BuckleyGarnerLackStreet},
simplicial maps ${\mathsf c\colon}\ssc\to\ssn(\cm\op)$ can
be identified with monoids in $\cm\op$, and so in turn with comonoids in \cm. 

On the other hand, we have shown that simplicial maps ${\mathsf
a\colon}\ssc\to\ssm_{12}$ can be identified with multiplier bimonoids in
\cc. In order to compare these, we construct a simplicial set
$\Q$ which is contained in both $\ssm_{12}$ and $\ssn(\cm\op)$. Now a
multiplier bimonoid  in \cc corresponds to a comonoid in \cm just when there
is a common factorization  of the corresponding simplicial maps as in the
following diagram.   
$$\xymatrix{
\ssc \ar@/^1pc/[drr]^{\mathsf a} \ar@/_1pc/[ddr]_{\mathsf c} 
\ar@{.>}[dr] \\
& \Q \ar[r] \ar[d] & \ssm_{12} \\
& \ssn(\cm\op) }$$

We now turn to the details. To define the monoidal category \cm,
in \cite{BohmLack:cat_of_mbm} we fixed a class \cq of regular
epimorphisms in \cc which is closed under composition and monoidal product,
contains the isomorphisms, and is right-cancellative in the sense that if
$s\colon A \to B$ and $t.s \colon A \to C$ are in \cq, then so is $t \colon B
\to C$. Since each $q \in \cq$ is a regular epimorphism, it is the coequalizer
of some pair of morphisms. Finally we suppose that this pair may be chosen in
such a way that the coequalizer is preserved by taking the monoidal product
with any fixed object.
 
The objects of the associated category \cm are those semigroups in \cc whose
multiplication is non-degenerate and belongs to \cq. The morphisms $f\colon
A\nto B$ are pairs $(f_1\colon AB \to B \leftarrow BA\colon f_2)$ of morphisms
in \cq such that the first two, equivalently, the last two diagrams in
$$
\xymatrix{
A^2B \ar[r]^-{1f_1} \ar[d]_-{m1} &
AB \ar[d]^-{f_1} \\
AB \ar[r]_-{f_1} &
B}\qquad 
\xymatrix{
BAB \ar[r]^-{1f_1} \ar[d]_-{f_2 1}&
B^2 \ar[d]^-m \\
B^2 \ar[r]_-m &
B} \qquad
\xymatrix{
BA^2 \ar[r]^-{f_2 1} \ar[d]_-{1m} &
BA \ar[d]^-{f_2} \\
BA \ar[r]_-{f_2} &
B}
$$
commute. The composite $g\bullet f$ of morphisms $f\colon A\nto B$ and
$g\colon B\nto C$ is defined by universality of the coequalizer in the top row
of either diagram in
$$
\xymatrix{
ABC \ar[r]^-{1g_1} \ar[d]_-{f_11} &
AC \ar[d]^-{(g\bullet f)_1} \\
BC \ar[r]_-{g_1} &
C}\qquad
\xymatrix{
CBA \ar[r]^-{g_2 1} \ar[d]_-{1 f_2} &
CA \ar[d]^-{(g\bullet f)_2} \\
CB \ar[r]_-{g_2} &
C.}
$$
The identity morphism $A\nto A$ is the pair $(m\colon A^2  \to A \leftarrow
A^2\colon m)$.

This category \cm is monoidal. The monoidal product of semigroups $A$ and
$C$ is 
$$
\xymatrix{
(AC)^2 \ar[r]^-{1c1} &
A^2C^2 \ar[r]^-{mm} &
AC}
$$
and the monoidal product of morphisms $f\colon A \nto B$ and $g\colon C\nto D$
is the pair 
$$ 
\xymatrix{
ACBD \ar[r]^-{1c1} &
ABCD \ar[r]^-{f_1g_1} &
BD &
BADC \ar[l]_-{f_2g_2} &
BDAC. \ar[l]_-{1c1} }
$$

If \cc is a closed braided monoidal category with pullbacks, then for any
semigroup $B$ with non-degenerate multiplication one can define its multiplier
monoid $\mM(B)$, see \cite{BohmLack:cat_of_mbm}. It is a monoid
in \cc and a universal object characterized by the property that morphisms
$ (f_1,f_2) \colon A\nto B$ in \cm correspond bijectively to
multiplicative morphisms $f\colon A\to \mM(B)$ in \cc such that  
$$
\xymatrix{
AB \ar[r]^-{f1} &
\mM(B)B \ar[r]^-{i_1} &
B}
\quad \textrm{and}\quad
\xymatrix{
BA \ar[r]^-{1f} &
B\mM(B) \ar[r]^-{i_2} &
B}
$$
are in \cq, where $i\colon \mM(B) \to \mM(B)$ is the identity morphism in \cc;
regarded as a morphism $(i_1,i_2)\colon \mM(B) \nto B$ in \cm.
In the category of vector spaces $\mM(B)$ reduces to the multiplier algebra
 of $B$ as defined in \cite{Dauns:Multiplier}. 

Take a multiplier bimonoid $(A,t_1,t_2,e)$ in \cc for which
\begin{itemize}
\item the  underlying semigroup has a non-degenerate
  multiplication $m:=e1.t_1=1e.t_2$  
\item $m,e$, and the morphisms $d_1$ and $d_2$ defined by
$$\xymatrix @R=6pt {
A^3 \ar[r]^-{c1} &
A^3 \ar[r]^-{1t_1} &
A^3 \ar[r]^-{c^{-1}1} &
A^3 \ar[r]^-{m1} &
A^2 \\
A^3 \ar[r]^-{1c} & A^3 \ar[r]^-{t_21} & A^3 \ar[r]^-{1c^{-1}} & 
A^3 \ar[r]^-{1m} & A^2}$$
all belong to \cq. 
\end{itemize}
The correspondence in \cite[Theorem 5.1]{BohmLack:cat_of_mbm} 
associated a multiplier bimonoid of this type to the comonoid in \cm with
underlying object $(A,m)$, with comultiplication $A\nto A^2$ having components $d_1$ and $d_2$, and with counit $A\nto I$ whose
components are both $e$. 

The simplicial set $\mathsf M_{12}$ has a simplicial subset $\Q$ as
follows. The only 0-simplex $\ast$ of $\mathsf M_{12}$ is a 0-simplex also in
$\Q$. The 1-simplices of $\Q$ are those semigroups $(A,m)$ in \cc whose 
multiplication is non-degenerate and belongs to \cq. The 2-simplices of
$\Q$ are those 2-simplices $(\varphi\vert \psi)$ of $\mathsf M_{12}$ whose
faces belong to $\Q$ and for which the morphisms 
$$
\xymatrix@R=8pt{
{\widehat \varphi}:A_{02} A_{01} A_{12} \ar[r]^-{1c^{-1}} &
A_{02} A_{12} A_{01} \ar[r]^-{\varphi 1} &
A_{01} A_{12} A_{01} \ar[r]^-{1c} &
A_{01} A_{01} A_{12} \ar[r]^-{m_{01} 1} &
A_{01} A_{12} \\
{\widehat \psi}:A_{01} A_{12} A_{02} \ar[r]^-{c^{-1} 1} &
A_{12} A_{01} A_{02} \ar[r]^-{1 \psi} &
A_{12} A_{01} A_{12} \ar[r]^-{c1} &
A_{01} A_{12} A_{12} \ar[r]^-{1 m_{12}} &
A_{01} A_{12}
}
$$
are in \cq. The 3-simplices of $\Q$ are all those 3-simplices of $\mathsf
M_{12}$ whose faces belong to $\Q$. Clearly a simplicial map from $\mathsf C$
to $\Q$ is the same thing as a multiplier bimonoid in \cc having the
properties listed above.  
 
The desired simplicial map $\Q \to \mathsf N(\cm\op)$ sends the 0-simplex
$\ast$ to the single 0-simplex of the nerve $\mathsf N(\cm\op)$. 
It sends the 1-simplex $(A,m)$ in $\mathsf Q$ to its underlying object 
$A$. It sends a 2-simplex $(\varphi\vert \psi)$ to the 
morphism $A_{02} \nto A_{01} A_{12}$ in \cm whose components are
$\widehat \varphi$ and $\widehat \psi$. On the higher simplices it is
unambiguously defined by the uniqueness of the filler of any boundary in
$\mathsf N(\cm\op)$. This assignment is injective by
non-degeneracy of the relevant multiplications. 



\bibliographystyle{plain}

\end{document}